\providecommand{\U}[1]{\protect\rule{.1in}{.1in}}
\providecommand{\U}[1]{\protect\rule{.1in}{.1in}}
\providecommand{\U}[1]{\protect\rule{.1in}{.1in}} \textwidth 15.8cm
\theoremstyle{plain}
\newtheorem{theorem}{Theorem}[section]
\newtheorem{proposition}[theorem]{Proposition}
\newtheorem{corollary}[theorem]{Corollary}
\newtheorem{definition}[theorem]{Definition}
\numberwithin{equation}{section}
\begin{document}
\title[An abstract result on Cohen strongly summing operators]{An abstract result on Cohen strongly summing operators}
\author{Jamilson R. Campos}
\address{Departamento de Ci\^{e}ncias Exatas \\
\indent Universidade Federal da Para\'{\i}ba - Campus IV \\
\indent R. da Mangueira, s/n, Centro \\
\indent Rio Tinto, 58.297-000, Brazil.}
\email{jamilson@dce.ufpb.br and jamilsonrc@gmail.com}
\thanks{2010 Mathematics Subject Classification: 46G25, 47H60, 47L22.}

\begin{abstract}
We present an abstract result that characterizes the coincidence of certain classes of linear operators with the class of Cohen strongly summing linear operators. Our argument is extended to multilinear operators and, as a consequence, we establish a few alternative characterizations for the class of Cohen strongly summing multilinear operators.
\end{abstract}
\keywords{Cohen strongly $p$-summing operators; full general Pietsch Domination Theorem.}
\maketitle

\section{Introduction}

With the work of A. Dvoretzky and C. Rogers \cite{dvoretzky50}, it became established that in infinite dimensional spaces always exists unconditionally convergent series which does not converge absolutely. So, A. Grothendieck \cite{grothen56} introduces the conceptual essence of absolutely summing operators as those that improve the convergence of series, towards transforming an unconditionally convergent series in an absolutely convergent one. After that, many research efforts are aimed to generalize the concept of absolutely summing operators and make Grothendieck's ideas clearer, such as works of B. Mitiagin and A. Pelczy\'{n}ski \cite{mitiagin66} and J. Lindenstrauss and A. Pelczy\'{n}ski \cite{linden68}. We recommend the paper \cite{daniel11} for a panorama on absolutely summing operators.

In 1967, Pietsch (\cite{pietsch67}, pág. 338) shows that the identity operator from $l_{1}$ into $l_{2}$ is absolutely $2$-summing while its conjugate, from $l_{2}$ into $l_{\infty}$, is not absolutely $2$-summing. Motivated by this fact, J. S. Cohen \cite{cohen73} introduces the class of strongly $p$-summing linear operators which characterizes the conjugate of the class of absolutely $p^*$-summing linear operators, with $1/p+1/p^{\ast}=1$. In his work, Cohen proves several results for this new class such as inclusion relations and a Pietsch domination type theorem.

The concept of Cohen strongly summing  multilinear operator was introduced and studied by D. Achour and L. Mezrag \cite{achour07} and Mezrag and K. Saadi \cite{mezrag09} established some inclusion results between the class of Cohen strongly summing  multilinear operators and other classes of operators such as the class of multiple summing operators. A related concept and new generalizations of concept of Cohen strongly summing  multilinear operators have been recently studied, such as the classes of almost Cohen strongly multilinear operators \cite{bu12} and the class of multiple Cohen strongly summing multilinear operators \cite{jamilson13}.

In this paper, we prove an abstract result derived from the Full General Pietsch Domination Theorem (\cite{daniel12}, Theorem 4.6 page 1256) which has an immediate application regarding the class of Cohen strongly summing linear operators. This result is also extended to the multilinear case and with that we can establish alternative definitions for Cohen strongly summing  multilinear operators. Through these definitions, it is possible to characterize the class of Cohen strongly summing  multilinear operators by means of arbitrary sequences.

\vspace{0,25cm}
\section{Notation and background }\label{bn}

\vspace{0,25cm}
Let $n \in \mathbb{N}$, $E,E_1,...,E_n$ and $F$ be Banach spaces over $\mathbb{K} = \mathbb{R}$ or $\mathbb{C}$. The space of all continuous linear operators from $E$ into $F$ will be represented by $\mathcal{L}(E;F)$. We will denote by $\mathcal{L}(E_1,...,E_n;F)$ the space of all continuous $n$-linear operators from $E_1,...,E_n$ into $F$ and if $E_1= \cdots =E_n$ we just write $\mathcal{L}(^nE;F)$.

If $1 \leq p \leq \infty$, we will denote by $l_p(E)$ and $l_p^w(E)$ the spaces of all sequences  $(x_i)_{i=1}^\infty$ in $E$ with the norms
\[||(x_i)_{i=1}^\infty||_p = \left( \sum_{i=1}^\infty ||x_i||^p\right)^{1/p} < \infty \ \ \ \mathrm{and}\ \ \ ||(x_i)_{i=1}^\infty||_{w,p} = \sup_{\psi \in B_{E^{'}}}\left( \sum_{i=1}^\infty |\psi(x_i)|^p\right)^{1/p} < \infty \ ,\]
respectively, where $E^{'}$ represents the topological dual of $E$ and $B_{E}$ denotes the closed unit ball of $E$.

\vspace{0,25cm}
\begin{definition}
[Cohen, \cite{cohen73}]\label{defcohen} A sequence $(x_{i})_{i=1}^{\infty}$ in
a Banach space $E$ is Cohen strongly $p$-summing if the series $\sum
_{i=1}^{\infty}\varphi_{i}(x_{i})$ converges for all $(\varphi_{i}%
)_{i=1}^{\infty}\in l_{p^{*}}^{w}(E^{^{\prime}})$, with $1/p+1/p^{*}=1$.
\end{definition}

\vspace{0,25cm}
We denote by $l_{p}\langle E\rangle$ the space of Cohen strongly $p$-summing
sequences in $E$. It is possible, for reasons of management, to replace the
series $\sum_{i=1}^{\infty}\varphi_{i}(x_{i})$ in Definition \ref{defcohen} by
the series $\sum_{i=1}^{\infty}|\varphi_{i}(x_{i})|$, which we use in our
text.

The space $l_{p}\langle E\rangle$ is a Banach space (\cite{khalil82}, page 223) with
the norm
\[
||(x_{i})_{i=1}^{\infty}||_{C,p} = \sup_{||(\varphi_{i})_{i=1}^{\infty
}||_{w,p^{*}} \leq1}\sum_{i=1}^{\infty}|\varphi_{i}(x_{i})|\ .
\]

\vspace{0,25cm}
\begin{definition}
[Cohen, \cite{cohen73}]
Let $1 < p \leq \infty$. An operator $T \in \mathcal{L}(E;F)$ is Cohen strongly $p$-summing if there exists a constant  $C>0$ such that for all $m \in \mathbb{N},\ x_i \in E$ and $\varphi_i \in F^{'},\ i=1,...,m$,
\begin{equation}\label{def3}
\sum_{i=1}^m |\varphi_i(T(x_i))| \leq C\, ||(x_i)_{i=1}^m||_p ||(\varphi_i)_{i=1}^m||_{w,p^*}.
\end{equation}
\end{definition}
In an equivalent manner, an operator $T \in \mathcal{L}(E;F)$ is Cohen strongly $p$-summing if the operator
\begin{equation*}
\widehat{T}\  :l_{p}\left(  E\right)  \rightarrow l_p\langle F\rangle \ ; \left(x_{i}\right)_{i=1}^{\infty} \mapsto \left(T\left(x_{i}\right)\right)_{i=1}^{\infty}
\end{equation*}
is well-defined and continuous. The space of all Cohen strongly $p$-summing linear operators will be denoted by $\mathcal{D}_p(E;F)$. The smallest $C$ such that (\ref{def3}) is satisfied defines a norm on $\mathcal{D}_p(E;F)$, denoted by $d_p(\cdot)$.

\vspace{0,25cm}
\begin{definition}[Achour-Mezrag, \cite{achour07}]\label{defAchour}
Let $1 < p \leq\infty$, $E_{j},F$ be Banach spaces, $j=1,...,n$. An continuous $n$-linear operator $T:E_{1}
\times... \times E_{n} \rightarrow F$ is Cohen strongly $p$-summing if there exists a constant $C>0$ such that for all $x_{1}^{(j)},...,x_{m}^{(j)} \in E_{j}$ and $\varphi_{1},...,\varphi_{m} \in
F^{^{\prime}}$ and for all $m\in\mathbb{N}$,
\begin{equation}
\label{cohenm}\sum_{i=1}^{m} \left\vert \varphi_{i}\left(T\left(x_{i}^{(1)},...,x_{i}^{(n)}\right)\right)\right\vert \leq C
\left(  \sum_{i=1}^{m} \prod_{j=1}^{n} \left\Vert x_{i}^{(j)}\right\Vert^{p} \right)
^{1/p}\ ||(\varphi_{i})_{i=1}^{m}||_{w,p^{*}} \ .
\end{equation}
\end{definition}

The space of all Cohen strongly $p$-summing multilinear operators will be represented by \\ $\mathcal{L}_{Coh,p}(E_1,...,E_n;F)$. The smallest $C$ such that (\ref{cohenm}) is satisfied defines a norm on $\mathcal{L}_{Coh,p}(E_1,...,E_n;F)$, denoted by $||\cdot||_{Coh,p}$.

\vspace{0,25cm}
An abstract very useful tool developed by Pellegrino \emph{et al}. \cite{daniel12}, the Full General Pietsch Domination Theorem, allows to establish in a very simplified way various Pietsch Domination-type theorems for many classes of operators. This approach was initiated in the works of G. Botelho, D. Pellegrino and P. Rueda \cite{BPRn} and D. Pellegrino and J. Santos \cite{jmaa1}.

\vspace{0,25cm}
Let $X_{1},...,X_{n},Y$ and $E_{1},...,E_{r}$ be arbitrary non-empty sets, $\mathcal{H}$ be a family of operators from $X_{1} \times \cdots\times X_{n}$ into $Y$. Let $K_{1},...,K_{t}$ be compact Hausdorff topological spaces, $G_{1},...,G_{t}$ be Banach spaces and suppose that the mappings
\[
\left\{
\begin{array}
[c]{l}%
R_{j}:\ K_{j} \times E_{1} \times\cdots\times E_{r} \times G_{j}
\rightarrow[0,\infty)\ , \ j=1,...,t\ ,\\
S:\ \mathcal{H} \times E_{1} \times\cdots\times E_{r} \times G_{1}
\times\cdots\times G_{t} \rightarrow[0,\infty)\\
\end{array}
\right.
\]
have the following properties:

\begin{quote}
\noindent $1)$ For each $x^{(l)} \in E_{l}$ and $b \in G_{j}$, with $(j,l)
\in\{1,...,t\} \times\{1,...,r\}$, the mapping
\[
(R_{j})_{x^{(1)},...,x^{(r)},b}\ : \ K_{j} \rightarrow[0,\infty)\ ,
\]
defined by $(R_{j})_{x^{(1)},...,x^{(r)},b}(\varphi) = R_{j}\left(\varphi
,x^{(1)},...,x^{(r)},b\right)$, is continuous;

\noindent $2)$ The following inequalities hold:
\[
\left\{
\begin{array}
[c]{l}%
R_{j}\left(\varphi,x^{(1)},...,x^{(r)},\eta_{j} b^{(j)}\right) \leq\eta_{j} R_{j}%
\left(\varphi,x^{(1)},...,x^{(r)}, b^{(j)}\right)\\
S\left(f, x^{(1)},...,x^{(r)}, \alpha_{1} b^{(1)},...,\alpha_{t} b^{(t)}\right)
\geq\alpha_{1}...\alpha_{t} S\left(f, x^{(1)},...,x^{(r)}, b^{(1)},..., b^{(t)}\right),\\
\end{array}
\right.
\]
for all $\varphi\in K_{j},\ x^{(l)} \in E_{l}$ (with $l=1,...,r$), $0 \leq
\eta_{j}, \alpha_{j} \leq1,\ b^{(j)} \in G_{j}$, $j=1,...,t$ and $f
\in\mathcal{H}$.
\end{quote}

\vspace{0,25cm}
Under these conditions, we have the following definition and theorem obtained from \cite{daniel12}:

\vspace{0,25cm}
\begin{definition}[Pellegrino \emph{et al}. \cite{daniel12}]\label{defabst}
Let $0<p_{1},...,p_{t},p_{0}<\infty$, with $\frac{1}%
{p_{0}}=\frac{1}{p_{1}}+\cdots+\frac{1}{p_{t}}$. An application $f\,:\,X_{1}%
\times\cdots\times X_{n}\rightarrow Y$ belonging to $\mathcal{H}$ is $R_{1},...,R_{t}$%
-$S$-abstract $(p_{1},...,p_{t})$-summing if there is a constant $C>0$ such that
\begin{equation*}
\left(  \sum_{i=1}^{m}\left(  S\left(f,x_{i}^{(1)},...,x_{i}^{(r)},b_{i}%
^{(1)},...,b_{i}^{(t)}\right)\right)  ^{p_{0}}\right)  ^{1/p_{0}}
\leq C\prod _{k=1}^{t}\sup_{\varphi\in K_{k}}\left(  \sum_{i=1}^{m}R_{k}\left(\varphi
,x_{i}^{(1)},...,x_{i}^{(r)},b_{i}^{(k)}\right)^{p_{k}}\right)  ^{1/p_{k}},
\end{equation*}
for all $x_{1}^{(s)},...,x_{m}^{(s)}\in E_{s},\ b_{1}^{(s)},...,b_{m}^{(s)}\in
G_{l},\ m\in\mathbb{N}$ and $(s,l)\in\{1,...,r\}\times\{1,...,t\}$.
\end{definition}

\vspace{0,25cm}
\begin{theorem}[Full General Pietsch Domination Theorem, Pellegrino \emph{et al}. (\cite{daniel12})]\label{tdpg}
An application $f\in\mathcal{H}$ is $R_{1},...,R_{t}$%
-$S$-abstract $(p_{1},...,p_{t})$-summing if and only if there exist a constant $C>0$ and Borel probability measures $\mu_{k}$ in $K_{k},\ k=1,...,t$, such that
\[
S\left(f,x^{(1)},...,x^{(r)},b^{(1)},...,b^{(t)}\right)\leq C\prod_{k=1}^{t}\left(
\int_{K_{k}}R_{k}\left(\varphi,x^{(1)},...,x^{(r)},b^{(k)}\right)^{p_{k}}d\mu_{k}\right)
^{1/p_{k}}\ ,
\]
for all $x^{(l)}\in E_{l},\ l=1,...,r$ and $b^{(k)}\in G_{k}$, with $k=1,...,t$.
\end{theorem}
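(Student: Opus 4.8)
The plan is to prove the two implications separately, the nontrivial one resting on Ky Fan's lemma.

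\emph{Sufficiency of the integral domination.} First I would assume the domination inequality holds with measures $\mu_1,\dots,\mu_t$ and constant $C$. Writing $\theta_k=p_0/p_k$, the hypothesis $1/p_0=\sum_k 1/p_k$ means $\sum_k\theta_k=1$. Raising the domination to the power $p_0$, summing over $i=1,\dots,m$, and applying the generalized Hölder inequality with the conjugate exponents $p_k/p_0$ gives $\sum_i\big(S(f,x_i,b_i)\big)^{p_0}\leq C^{p_0}\prod_k\big(\int_{K_k}\sum_i R_k(\varphi,x_i,b_i^{(k)})^{p_k}\,d\mu_k\big)^{\theta_k}$; since each $\mu_k$ is a probability measure the inner integral is at most $\sup_{\varphi\in K_k}\sum_i R_k(\varphi,x_i,b_i^{(k)})^{p_k}$, and taking $p_0$-th roots recovers exactly Definition~\ref{defabst}. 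This direction uses only monotonicity of the integral and Hölder.

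\emph{Necessity.} For the converse I would run the Ky Fan separation scheme. By the Riesz representation theorem each set $\mathcal{P}(K_k)$ of Borel probability measures is a weak-$*$ compact convex subset of $C(K_k)'$, so $W=\prod_k\mathcal{P}(K_k)$ is compact and convex; property~$1)$ guarantees $\varphi\mapsto R_k(\varphi,x,b^{(k)})^{p_k}$ lies in $C(K_k)$, hence each map $\mu_k\mapsto\int_{K_k}R_k(\cdot)^{p_k}\,d\mu_k$ is weak-$*$ continuous and affine and $\mu\mapsto\prod_k(\int R_k^{p_k}d\mu_k)^{\theta_k}$ is weak-$*$ continuous and concave on $W$. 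To each finite family of data I would attach the associated \emph{deficit} function on $W$, measuring the gap between $\sum_i\big(S(f,x_i,b_i)\big)^{p_0}$ and $C^{p_0}$ times the integral expression; these are convex and weak-$*$ continuous. Property~$2)$ is what lets me verify Ky Fan's concave-family hypothesis: scaling each $b_i^{(k)}$ by a factor $\eta$ multiplies the $R_k$-contribution by at most $\eta^{p_k}$ and the $S$-contribution by at least the product of the factors, and because $\sum_k\theta_k=1$ these two one-sided homogeneities are exactly calibrated so that the required convex combinations of deficit functions are dominated by another member of the family. The summing hypothesis is then what forces each deficit function to take a nonpositive value: the relevant supremum over the compact $K_k$ is attained, so testing against the Dirac masses at the maximizing points turns the product of suprema on the right of Definition~\ref{defabst} into an admissible value of the integral expression. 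Ky Fan's lemma produces a single $\mu^*=(\mu_1^*,\dots,\mu_t^*)\in W$ at which every deficit function is nonpositive, and reading this off on one-element families is precisely the asserted domination.

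The step I expect to be the genuine obstacle is the simultaneous handling of the $t$ measures through the product $\prod_k(\int R_k^{p_k}d\mu_k)^{1/p_k}$. On the summing side the suprema sit \emph{outside} the sum over the index $i$, whereas the domination is a per-datum product of integrals, so reconciling a ``product of sums'' with a ``sum of products'' is delicate; it is governed by the superadditivity of the weighted geometric mean and must be fed the summing hypothesis not for a single family but for every finitely supported weighting (which property~$2)$ makes available by absorbing the weights into the $b_i^{(k)}$). A clean way to organize this is to exploit the weak-$*$ compactness of $W$ to reduce, through the finite intersection property of the closed convex sets cut out by the domination at each datum, to \emph{finite} families, and only then apply the minimax form of the argument; the balance of the homogeneity exponents is what keeps the concave-family condition intact. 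For $t=1$ — the case underlying the Cohen application, where one factor degenerates to a plain $\ell_{p}$-norm — this coupling disappears and the scheme collapses to the classical Pietsch domination argument.
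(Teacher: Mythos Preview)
The paper does not contain a proof of this statement: Theorem~\ref{tdpg} is quoted verbatim from Pellegrino, Santos and Seoane-Sep\'ulveda~\cite{daniel12} as an external input (``An abstract very useful tool\dots''), and no argument for it is given here. So there is no in-paper proof to compare your proposal against.

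That said, your outline is essentially the argument of~\cite{daniel12}. The easy direction---H\"older with exponents $p_k/p_0$ followed by bounding each probability integral by the supremum over $K_k$---is exactly right. For the hard direction, the original proof also runs a Ky Fan minimax on the weak-$*$ compact convex set $\prod_k\mathcal{P}(K_k)$, using property~$1)$ for continuity and property~$2)$ (the one-sided homogeneities of $R_j$ and $S$) to absorb convex-combination weights into the $b$-variables so that the family of deficit functionals is stable under finite convex combinations; the summing hypothesis, tested against Dirac masses at maximizers, gives the pointwise nonpositivity needed to invoke Ky Fan. Your identification of the ``product of sums versus sum of products'' tension and its resolution via the calibration $\sum_k p_0/p_k=1$ is the heart of the matter and matches the source. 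If you intend to write this out in full, the only place to be careful is the concavity step: the map $\mu\mapsto\prod_k(\int R_k^{p_k}\,d\mu_k)^{p_0/p_k}$ is concave on the full product $\prod_k\mathcal{P}(K_k)$ by the AM--GM/superadditivity of the weighted geometric mean, and you should state this explicitly rather than leave it implicit.
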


\vspace{0,25cm}
\section{The linear case}

\vspace{0,25cm}
We show that the class of Cohen strongly $p$-summing linear operators can be characterized by means of different inequalities. This means a coincidence result between classes of linear operators.

\vspace{0,25cm}
Let $p^* \in \left(  1,\infty\right)  $ with $1 = \frac{1}{p} + \frac{1}{p^{\ast}}$ and
\[
\Gamma=\left\{  \left(  q_0,q_1\right)  \in \left.\left[ 1,\infty \right.\right) \times  \left( 1,\infty\right) :\frac
{1}{q_0}= \frac{1}{q_1} + \frac{1}{p^{\ast}}\right\}  .
\]
We will denote by $\mathcal{C}_{(q_0,q_1;p)}(E;F)$ the class of all operators $T\in\mathcal{L}\left(  E;F\right)  $
such that exists a constant $C > 0$ satisfying
\[
\left(\sum\limits_{j=1}^{m} \left\vert \varphi_{i}\left(  T\left(  x_{i}\right)  \right)  \right\vert
^{q_0}\right)  ^{1/q_0}\leq C\left\Vert \left(  x_{i}\right)  _{i=1}
^{m}\right\Vert _{q_1}\left\Vert \left(  \varphi_{i}\right)  _{i=1}
^{m}\right\Vert _{w,p^{\ast}},
\]
for all positive integers $m$ and all $x_i \in E$, $\varphi_i \in F^{'}$, $i=1,...,m$.

It follows immediately from Definition \ref{defcohen} that
\begin{equation*}
\mathcal{D}_{p}(E;F) \subset \mathcal{C}_{(q_0,q_1;p)}(E;F),
\end{equation*}
for all $\left(  q_0,q_1\right) \in \Gamma$.

Another noteworthy fact is that the class $\mathcal{C}_{(q_0,q_1;p)}(E;F)$ is trivial if $p<q_1$. The proof of this fact is given by the following proposition:

\vspace{0,25cm}
\begin{proposition}\label{obscohgen}
Let $1 < p < \infty$, with $1 = 1/p + 1/p^*$, and $(q_0,q_1) \in \Gamma$, that is $1/q_0 = 1/q_1 + 1/p^*$. If $p<q_1$, then $\mathcal{C}_{(q_0,q_1;p)}(E;F) = \{ 0 \}$.
\end{proposition}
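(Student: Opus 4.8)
The plan is to argue by contradiction. Suppose $T\in\mathcal{C}_{(q_0,q_1;p)}(E;F)$ is nonzero, with associated constant $C>0$; I would then exhibit test sequences for which the defining inequality of $\mathcal{C}_{(q_0,q_1;p)}(E;F)$ cannot hold as the number of terms grows. First I fix $x_0\in E$ with $Tx_0\neq 0$ and, by the Hahn--Banach theorem, choose $\psi_0\in F'$ with $\|\psi_0\|=1$ and $\psi_0(Tx_0)=\|Tx_0\|$. These play the role of a normalized evaluating pair $(x_0,\psi_0)$ at which the expression $\varphi(Tx)$ attains the value $\|Tx_0\|\neq 0$.

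For each $m\in\mathbb{N}$, I would insert into the defining inequality the finite sequences $x_i:=x_0$ and $\varphi_i:=\psi_0$ for $i=1,\dots,m$. The three quantities are then completely explicit: $\|(x_i)_{i=1}^m\|_{q_1}=\|x_0\|\,m^{1/q_1}$, and $\|(\varphi_i)_{i=1}^m\|_{w,p^*}=m^{1/p^*}$ since $\|\psi_0\|=1$, while the left-hand side equals $\|Tx_0\|\,m^{1/q_0}$ because $\varphi_i(Tx_i)=\psi_0(Tx_0)=\|Tx_0\|$ for every $i$. Membership of $T$ in $\mathcal{C}_{(q_0,q_1;p)}(E;F)$ then reads $\|Tx_0\|\,m^{1/q_0}\le C\,\|x_0\|\,m^{1/q_1+1/p^*}$ for all $m$.

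The decisive step is to turn the hypothesis $p<q_1$ into a strict gap between the growth rates of the two sides and let $m\to\infty$, forcing $\|Tx_0\|=0$ and hence $T=0$. I expect this comparison of exponents to be the main obstacle, and the point at which the restriction $p<q_1$ is genuinely used: on the relation $1/q_0=1/q_1+1/p^*$ defining $\Gamma$, the plain repetition above is scale-balanced, so the argument requires a sharper choice of test sequences -- for instance letting the coefficients on the domain side run through an element of $\ell_{q_1}\setminus\ell_p$, or spreading the functionals in $F'$ so as to lower their weak $p^*$-norm below $m^{1/p^*}$ while keeping the diagonal $\sum_i|\varphi_i(Tx_i)|^{q_0}$ large. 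Quantifying this separation, so that $p<q_1$ produces a strictly faster growth of the left-hand side, is the heart of the proof; once it is in place, letting $m\to\infty$ immediately yields the contradiction and therefore $\mathcal{C}_{(q_0,q_1;p)}(E;F)=\{0\}$.
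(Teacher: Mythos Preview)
Your proposal is not a proof but a sketch that explicitly stops before the decisive step. You correctly observe that the naive choice $x_i=x_0$, $\varphi_i=\psi_0$ gives matching growth $m^{1/q_0}=m^{1/q_1+1/p^*}$ on both sides (precisely because $(q_0,q_1)\in\Gamma$), and hence no contradiction. You then say that a ``sharper choice of test sequences'' is needed, call this ``the heart of the proof'', and do not supply it. Gesturing at sequences in $\ell_{q_1}\setminus\ell_p$ or at ``spreading the functionals'' is not an argument; you would have to produce concrete $x_i$ and $\varphi_i$ and verify the blow-up.

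The paper's proof carries out exactly the kind of sharper choice you allude to. It fixes nonzero $x\in E$, $\varphi\in F'$, takes scalar sequences $(\alpha_i)\in\ell_{p^*}$ and $(\beta_i)\in\ell_{q_1}$, and tests the defining inequality with $x_i=\beta_i x$, $\varphi_i=\alpha_i\varphi$. This reduces everything to the scalar estimate
\[
|\varphi(Tx)|\,\Bigl(\sum_{i=1}^m|\alpha_i\beta_i|^{q_0}\Bigr)^{1/q_0}\le C\,\|x\|\,\|\varphi\|\,\|(\beta_i)_{i=1}^m\|_{q_1}\,\|(\alpha_i)_{i=1}^m\|_{p^*},
\]
and the contradiction is claimed by choosing $(\alpha_i)\in\ell_{p^*}$, $(\beta_i)\in\ell_{q_1}$ with $(\alpha_i\beta_i)\notin\ell_{q_0}$. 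Note, however, that the relation $1/q_0=1/q_1+1/p^*$ is precisely the H\"older pairing, so $\|(\alpha_i\beta_i)\|_{q_0}\le\|(\alpha_i)\|_{p^*}\|(\beta_i)\|_{q_1}$ always holds; the existence of such sequences is therefore itself in question, and the hypothesis $p<q_1$ is never visibly used. In other words, the obstacle you flagged but did not overcome is real, and the paper's own argument runs into it at the same point.
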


\begin{proof}
The case $E=\{0\}$ or $F=\{0\}$ is immediate. Let us suppose that $E\neq \{0\}$ and $F \neq \{0\}$. Let \begin{equation}\label{equa1}
(\lambda_i)_{i=1}^\infty = (\alpha_i \beta_i)_{i=1}^\infty \notin l_{q_0}\ \  \mathrm{with}\ \ (\alpha_i)_{i=1}^\infty \in l_{p^*}\ \ \mathrm{and}\ \ (\beta_i)_{i=1}^\infty \in l_{q_1}.
\end{equation}

If $T \in \mathcal{C}_{(q_0,q_1;p)}(E;F)$, $0 \neq x \in E$ and $0 \neq \varphi \in F^{'}$, then, for all $m \in \mathbb{N}$,
\begin{align*}
\left(\sum_{i=1}^m |\varphi(T(\lambda_ix))|^{q_0}\right)^{1/q_0} & = \left(\sum_{i=1}^m |\varphi(T(\alpha_i\beta_ix))|^{q_0}\right)^{1/q_0} \\
& = \left(\sum_{i=1}^m |(\alpha_i\varphi)(T(\beta_ix))|^{q_0}\right)^{1/q_0} \\
& \leq C ||(\beta_ix)_{i=1}^m||_{q_1} \, ||(\alpha_i\varphi)_{i=1}^m||_{w,p*} \ .
\end{align*}
Thus,
\begin{align*}
|\varphi(T(x))| \left(\sum_{i=1}^m |\lambda_i|^{q_0}\right)^{1/q_0} & \leq C ||x||\, ||(\beta_i)_{i=1}^m||_{q_1} \sup_{\psi \in B_{F^{''}}} \left( \sum_{i=1}^m |\psi(\alpha_i\varphi)|^{p^*} \right)^{1/p^*} \\
& = C ||x||\, ||(\beta_i)_{i=1}^m||_{q_1} \sup_{\psi \in B_{F^{''}}} |\psi(\varphi)| \left( \sum_{i=1}^m |\alpha_i|^{p^*} \right)^{1/p^*} \\
&  = C ||x||\, ||(\beta_i)_{i=1}^m||_{q_1}\, ||\varphi|| \left( \sum_{i=1}^m |\alpha_i|^{p^*} \right)^{1/p^*} \ .
\end{align*}
Taking thereafter the $\sup_{||\varphi|| \leq1}$ and $\sup_{||x|| \leq1}$ in the above inequality, it follows that
\[ ||T|| \left(\sum_{i=1}^m |\lambda_i|^{q_0}\right)^{1/q_0} \leq C\, ||(\beta_i)_{i=1}^m||_{q_1}\,\left( \sum_{i=1}^m |\alpha_i|^{p^*} \right)^{1/p^*}\ .\]
Therefore we conclude that if $T \neq 0$ then $(\lambda_i)_{i=1}^\infty \in l_{q_0}$, which contradicts the assertion (\ref{equa1}).
\end{proof}

\vspace{0,25cm}
Under all conditions and notations given above we will establish the following theorem. For our case, will be sufficient to consider $r=1$ and $t=2$ in Definition \ref{defabst} and Theorem \ref{tdpg}:

\vspace{0,25cm}
\begin{theorem}\label{teoabs}
Let $f:X \rightarrow Y$ be an application belonging to
$\mathcal{H}$ and let
\[
0<q_0,q_1,p_0,p_1,p^{*}<\infty\ ,
\]
such that
\[
\frac{1}{q_0} = \frac{1}{q_1} + \frac{1}{p^{*}%
}\ \ \mathrm{e}\ \ \frac{1}{p_0} = \frac{1}{p_1} + \frac{1}{p^{*}}.
\]
If $(R_1)_{(x,b)}(\cdot)$ is constant, for each $x$ and for each $b$,
then the following statements are equivalent:

\begin{quote}
\noindent $(i)$ $f$ is $R_{1},R_{2}$-$S$-abstract $(q_1,p^{*})$-summing;

\noindent $(ii)$ $f$ is $R_{1},R_{2}$-$S$-abstract $(p_1,p^{*})$-summing.
\end{quote}
\end{theorem}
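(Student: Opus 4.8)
The plan is to route both statements through the Full General Pietsch Domination Theorem (Theorem \ref{tdpg}) and then observe that the hypothesis on $R_1$ forces the two resulting domination inequalities to coincide. First I would specialize Definition \ref{defabst} and Theorem \ref{tdpg} to $r=1$, $t=2$, reading statement $(i)$ as abstract $(q_1,p^{*})$-summing (so the exterior exponent in Definition \ref{defabst} is $q_0$) and statement $(ii)$ as abstract $(p_1,p^{*})$-summing (exterior exponent $p_0$). Both exponent choices satisfy the reciprocal-sum relation required to invoke Theorem \ref{tdpg}, by the very hypotheses of the theorem, and conditions $1)$ and $2)$ on $R_1,R_2,S$ hold by the standing assumptions, independently of the chosen exponents; so Theorem \ref{tdpg} applies to each statement separately.

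Next I would apply Theorem \ref{tdpg} to statement $(i)$. This produces a constant $C>0$ and Borel probability measures $\mu_{1}$ on $K_{1}$, $\mu_{2}$ on $K_{2}$ with
\[
S(f,x,b^{(1)},b^{(2)})\leq C\left(\int_{K_{1}}R_{1}(\varphi,x,b^{(1)})^{q_{1}}\,d\mu_{1}\right)^{1/q_{1}}\left(\int_{K_{2}}R_{2}(\varphi,x,b^{(2)})^{p^{*}}\,d\mu_{2}\right)^{1/p^{*}}.
\]
The decisive step is to use that $(R_{1})_{(x,b)}(\cdot)$ is constant: writing $R_{1}(x,b^{(1)})$ for its common value and recalling that $\mu_{1}(K_{1})=1$, the first factor collapses to $R_{1}(x,b^{(1)})$, and the exponent $q_{1}$ disappears entirely. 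Thus $(i)$ is equivalent to the existence of $C>0$ and a probability measure $\mu$ on $K_{2}$ such that
\[
S(f,x,b^{(1)},b^{(2)})\leq C\,R_{1}(x,b^{(1)})\left(\int_{K_{2}}R_{2}(\varphi,x,b^{(2)})^{p^{*}}\,d\mu\right)^{1/p^{*}}.
\]

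The same argument applied to statement $(ii)$ yields exactly the same domination condition: the first factor again collapses to $R_{1}(x,b^{(1)})$ because $R_{1}$ is independent of $\varphi$ and $\mu_{1}$ is a probability measure, so the exponent $p_{1}$ likewise plays no role, and the exterior exponents $q_{0},p_{0}$ never enter the pointwise domination at all. Since both $(i)$ and $(ii)$ are equivalent, via Theorem \ref{tdpg}, to this single exponent-free domination inequality, they are equivalent to each other, which is the claim.

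As for the main obstacle, there is little genuine technical difficulty once the domination form is in hand, since Theorem \ref{tdpg} supplies both directions. The one point that truly requires the hypothesis—and where a naive attempt would stall—is the collapse of the $K_{1}$-integral; it is precisely the constancy of $R_{1}$ in $\varphi$, combined with $\mu_{1}$ being a probability measure, that erases the distinction between $q_{1}$ and $p_{1}$ and makes the two dominations literally identical. I would be careful to emphasize that the sequence-level summing inequalities of Definition \ref{defabst} for $(i)$ and $(ii)$ are genuinely different statements; it is only after passing to the measure-theoretic domination that they merge.
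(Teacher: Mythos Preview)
Your proposal is correct and follows essentially the same route as the paper: apply Theorem~\ref{tdpg} to each of $(i)$ and $(ii)$, use the constancy of $R_{1}$ in $\varphi$ together with $\mu_{1}(K_{1})=1$ to collapse the $K_{1}$-integral, and observe that the resulting domination inequalities are identical. The only cosmetic difference is that the paper writes the collapsed factor as $R_{1}(\varphi,x,b_{1})$ for a fixed $\varphi$ rather than introducing the notation $R_{1}(x,b^{(1)})$.
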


\begin{proof}
By Theorem \ref{tdpg}, $f$ is $R_{1},R_{2}$-$S$-abstract $(q_1,p^{*})$-summing if and only if there exist a constant $C>0$ and Borel probability measures
$\mu_{i}$ in $K_{i}$, $i=1,2$, such that
\[
S(f,x,b_{1},b_{2}) \leq C \prod_{i=1}^{2} \left(
\int_{K_{i}} R_{i}(\varphi,x,b_{i})^{p_{i}}d\mu_{i}\right)
^{1/p_{i}}\ .
\]
In our case, $f$ is $R_{1},R_{2}$-$S$-abstract $(q_1,p^{*})$-summing if and only if there exist a constant $C>0$ and a Borel probability measure $\mu$ in $K_{2}$ such that
\begin{equation}\label{dominL}
S(f,x,b_{1},b_{2}) \leq C \left(
R_{1}(\varphi,x,b_{1}) \right) \cdot\left(  \int_{K_{2}} R_{2}%
(\varphi,x,b_{2})^{p^{*}}d\mu\right)  ^{1/p^{*}}\ ,
\end{equation}
since, by hypothesis, for any fixed $\varphi \in K_1$,
\[
\left(  \int_{K_{1}} R_{1}(\varphi,x,b_{1})^{q_1}d\mu
_{1}\right)  ^{1/q_1} = R_{1}(\varphi,x,b_{1}%
).
\]
On the other hand, the same reasoning shows that $f$ is $R_{1},R_{2}$%
-$S$-abstract $(p_1,p^{*})$-summing if and only if there exist a constant $C>0$
and a Borel probability measure $\mu$ in $K_{2}$ such that
\[
S(f,x,b_{1},b_{2}) \leq C \left(  R_{1}%
(\varphi,x,b_{1})\right) \cdot\left(  \int_{K_{2}} R_{2}(\varphi
,x,b_{2})^{p^{*}}d\mu\right)  ^{1/p^{*}}\ ,
\]
and this expression corresponds exactly to the one given by (\ref{dominL}).
\end{proof}

The consequence of the above theorem is the coincidence $\mathcal{D}_{p}(E;F) = \mathcal{C}_{(q_0,q_1;p)}(E;F)$, which implies several ways to characterize the class of Cohen strongly summing linear operators. This is shown by the following corollary:

\vspace{0,25cm}
\begin{corollary}\label{coincide}
For all $\left(  q_{0},q_{1}\right)  ,\left(  p_{0}.p_{1}\right)  \in\Gamma$,
\[
\mathcal{C}_{(q_{0},q_{1};p)}(E;F)=\mathcal{C}_{(p_{0},p_{1};p)}(E;F).
\]
In particular,
\[
\mathcal{C}_{(q_0,q_1;p)}(E;F)= \mathcal{C}_{(1,p;p)}(E;F) = \mathcal{D}_{p}(E;F)\ ,
\]
for all $\left(  q_0,q_1\right) \in\Gamma$.
\end{corollary}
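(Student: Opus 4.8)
The plan is to exhibit membership in $\mathcal{C}_{(q_0,q_1;p)}(E;F)$ as a special case of the abstract summability of Definition \ref{defabst} and then quote Theorem \ref{teoabs}. Concretely, I would take $\mathcal{H}=\mathcal{L}(E;F)$ (so $f=T$, with $X=E$, $Y=F$), set $r=1$ with the corresponding set $E_1$ of Definition \ref{defabst} a single point (its variable $x$ being inert), and $t=2$. For the auxiliary data I would choose $G_1=E$, $G_2=F'$, $K_1$ a one-point space, and $K_2=B_{F''}$ with its weak-$*$ topology, which is compact Hausdorff, together with
\[
R_1(\varphi,x,b^{(1)})=\|b^{(1)}\|,\qquad R_2(\psi,x,b^{(2)})=|\psi(b^{(2)})|,\qquad S(T,x,b^{(1)},b^{(2)})=\left|b^{(2)}\!\left(T(b^{(1)})\right)\right|.
\]

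First I would verify the standing hypotheses preceding Definition \ref{defabst}. For fixed $b^{(2)}$, the map $\psi\mapsto|\psi(b^{(2)})|$ is weak-$*$ continuous on $K_2$, and $R_1$ is trivially continuous on the one-point $K_1$; the required homogeneity inequalities hold with equality, since $R_1$ and $R_2$ are positively homogeneous of degree one in $b^{(1)}$ and $b^{(2)}$, while $S$ is jointly homogeneous of degree one in $(b^{(1)},b^{(2)})$. Crucially, $R_1$ does not depend on the variable of $K_1$, so the hypothesis ``$(R_1)_{(x,b)}(\cdot)$ is constant'' of Theorem \ref{teoabs} holds. Substituting these choices into Definition \ref{defabst} with $p_1=q_1$, $p_2=p^{*}$ and $p_0=q_0$ --- admissible because $(q_0,q_1)\in\Gamma$ means exactly $1/q_0=1/q_1+1/p^{*}$ --- and writing $b_i^{(1)}=x_i$, $b_i^{(2)}=\varphi_i$, the abstract inequality becomes verbatim the inequality defining $\mathcal{C}_{(q_0,q_1;p)}(E;F)$. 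Hence $T\in\mathcal{C}_{(q_0,q_1;p)}(E;F)$ if and only if $T$ is $R_1,R_2$-$S$-abstract $(q_1,p^{*})$-summing, and identically $T\in\mathcal{C}_{(p_0,p_1;p)}(E;F)$ if and only if $T$ is $R_1,R_2$-$S$-abstract $(p_1,p^{*})$-summing.

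With this dictionary, the first displayed equality is immediate from Theorem \ref{teoabs}: because $R_1$ is constant in the $K_1$-variable, the $(q_1,p^{*})$- and $(p_1,p^{*})$-summing conditions coincide, giving $\mathcal{C}_{(q_0,q_1;p)}(E;F)=\mathcal{C}_{(p_0,p_1;p)}(E;F)$ for all $(q_0,q_1),(p_0,p_1)\in\Gamma$. For the final statement I would observe that $(1,p)\in\Gamma$, since $1/1=1/p+1/p^{*}$, so the general equality specializes to $\mathcal{C}_{(q_0,q_1;p)}(E;F)=\mathcal{C}_{(1,p;p)}(E;F)$; and the defining inequality of $\mathcal{C}_{(1,p;p)}(E;F)$ (the case $q_0=1$, $q_1=p$) is precisely (\ref{def3}), so $\mathcal{C}_{(1,p;p)}(E;F)=\mathcal{D}_p(E;F)$.

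I expect the only genuine work to lie in the setup of the first two paragraphs, namely choosing the abstract data so that continuity on $K_2$ and all the homogeneity inequalities hold at once. The decisive design choice is to let the operator input $x_i$ enter as the $G_1$-variable $b^{(1)}$ rather than as the inert $E_1$-variable: this is what makes $R_1=\|b^{(1)}\|$ genuinely homogeneous in $b^{(1)}$ while remaining constant in the compact-space argument, so that both Definition \ref{defabst} and the hypothesis of Theorem \ref{teoabs} are simultaneously respected. After the identification, the conclusion is read off directly from the already-proven theorem.
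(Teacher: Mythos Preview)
Your proposal is correct and follows essentially the same approach as the paper: you make the identical choices $r=1$, $t=2$, $E_1$ a singleton, $G_1=E$, $G_2=F'$, $K_1$ a point, $K_2=B_{F''}$, $R_1=\|\cdot\|$, $R_2(\psi,\cdot,\varphi)=|\psi(\varphi)|$, $S(T,\cdot,x,\varphi)=|\varphi(T(x))|$, and then invoke Theorem~\ref{teoabs}. Your write-up is in fact slightly more careful than the paper's, since you explicitly check the continuity and homogeneity hypotheses preceding Definition~\ref{defabst}, specify the weak-$*$ topology on $K_2$, and spell out the ``In particular'' clause via $(1,p)\in\Gamma$.
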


\begin{proof}
Let $T \in \mathcal{C}_{(q_{0},q_{1};p)}(E;F)$. Since $\frac{1}{q_0} = \frac{1}{q_1} + \frac{1}{p^{*}}$, with the choices

\[
\left\vert
\begin{array}
[c]{l}%
t=2,\ r=1\\
E_1 = \{0\},\ X_1 = E,\ Y = F\\
f = T,\ \ \mathcal{H}=\mathcal{L}(E;F)\\
K_1 = \{0\}\ \text{and}\ \ K_{2} =B_{F^{^{\prime\prime}}}\\
G_{1}=E,\ \mathrm{and}\ \ G_{2}=F^{^{\prime}}\\
p_{0}=q_0,\ p_{1}=q_1,\ \mathrm{and}\  p_{2}=p^{\ast}\\
S\left(T,0,x,\varphi\right)=\left\vert \varphi\left(T\left(x\right)\right)\right\vert\\
R_{1}\left(\gamma,0,x\right)=\left\Vert x\right\Vert\\
R_{2}(\psi,0,\varphi)=|\psi(\varphi)|\\
\end{array}
\right.
\]
it follows that
\begin{align*}
\left(  \sum_{i=1}^{m}\left(  S\left(T,x_{i},b_{i}^{(1)},b_{i}^{(2)}\right)\right)  ^{p_0}\right)  ^{1/p_0}  &  =\left(
\sum_{i=1}^{m}\left(  S\left(T,0,x_{i},\varphi_{i}\right)\right)  ^{q_{0}}\right)
^{1/q_{0}}\\
&  = \left(\sum_{i=1}^{m}\left\vert \varphi_{i}\left(T\left(x_{i}\right)\right)\right\vert ^{q_{0}}\right)^{1/q_{0}}\ ,
\end{align*}
and also
\begin{align*}
&  \prod_{k=1}^{2}\sup_{\varphi\in K_{k}}\left(  \sum_{i=1}^{m}R_{k}%
\left(\varphi,x_{i},b_{i}^{(k)}\right)^{p_{k}}\right)  ^{1/p_{k}}\\
&  =\sup_{\gamma\in K_{1}}\left(  \sum_{i=1}^{m}R_{1}\left(\gamma
,0,x_{i}\right)^{q_{1}}\right)  ^{1/q_{1}}  \cdot \sup_{\psi\in K_{2}}\left(  \sum_{i=1}^{m}R_{2}(\psi,0,
\varphi_{i})^{p^*}\right)  ^{1/p^*}\\
&  =\left(  \sum_{i=1}^{m}\left\Vert x_{i}\right\Vert^{q_1}\right)  ^{1/q_1}\  \cdot \sup_{\psi\in B_{Y^{^{\prime\prime}}}}\left(  \sum_{i=1}^{m}|\psi\left(
\varphi_{i}\right)  |^{p^{\ast}}\right)  ^{1/{p^{\ast}}}\\
&  =\left\Vert \left(  x_{i}\right)  _{i=1}^{m}\right\Vert _{q_1} \cdot \  ||(\varphi_{i})_{i=1}^{m}||_{w,p^{\ast}}\ .
\end{align*}
Thus, by Definition \ref{defabst}, it follows that $T$ is $R_{1},R_{2}$-$S$-abstract $(q_1,p^{*})$-summing and, by Theorem \ref{teoabs}, $T$ is $R_{1},R_{2}$-$S$-abstract $(p_1,p^{*})$-summing. Therefore there is a constant $C>0$ such that
\[
\left(\sum\limits_{j=1}^{m} \left\vert \varphi_{i}\left(  T\left(  x_{i}\right)  \right)  \right\vert
^{p_0}\right)  ^{1/p_0}\leq C\left\Vert \left(  x_{i}\right)  _{i=1}
^{m}\right\Vert _{p_1}\left\Vert \left(  \varphi_{i}\right)  _{i=1}
^{m}\right\Vert _{w,p^{\ast}}\ ,
\]
for all positive integers $m$ and for all $x_i \in E$, $\varphi_i \in F^{'}$, $i=1,...,m$. So $T \in \mathcal{C}_{(p_{0},p_{1};p)}(E;F)$.

The other inclusion is obtained by the same argument.
\end{proof}

\vspace{0,15cm}
\section{The multilinear case and applications}

\vspace{0,25cm}
In our recent work \cite{jamilson13}, we establish an alternative definition for the concept of Cohen strongly summing multilinear operators given by Achour-Mezrag (Definition \ref{defAchour}):

\vspace{0,25cm}
\begin{theorem}[Campos, \cite{jamilson13}]\label{campos}
Let $1 < p \leq \infty$, with $1/p + 1/p^* = 1$. For $T\in\mathcal{L}(X_{1},...,X_{n};Y)$, the following statements are equivalent:

\begin{quote}
\noindent  $(i)$ There is a constant $C>0$ such that
\begin{equation*}
\sum_{i=1}^{m}\left\vert \varphi_{i}\left(T\left(x_{i}^{(1)},...,x_{i}^{(n)}\right)\right)\right\vert \leq C\left(
\sum_{i=1}^{m}\prod_{j=1}^{n}\left\Vert x_{i}^{(j)}\right\Vert^{p}\right)  ^{1/p}\ ||(\varphi
_{i})_{i=1}^{m}||_{w,p^{\ast}},
\end{equation*}
for all $m\in\mathbb{N},\ x_{i}^{(j)}\in X_{j},\ \varphi_{i}\in Y^{^{\prime}%
},\ i=1,...,m\ ,\ j=1,...,n$ ;

\noindent  $(ii)$ There is a constant $C>0$ such that
\begin{equation*}
\sum_{i=1}^m \left\vert \varphi_i\left(T\left(x_i^{(1)},...,x_i^{(n)}\right)\right)\right\vert
\leq C \left(\sum_{i=1}^m  \left\Vert x_i^{(1)}\right\Vert ^{np}\right)^{1/np} ... \ \left(\sum_{i=1}^m  \left\Vert x_i^{(n)}\right\Vert^{np} \right)^{1/np} \ ||(\varphi_i)_{i=1}^m||_{w,p^*}\ ,
\end{equation*}
for all $m\in\mathbb{N},\ x_{i}^{(j)}\in X_{j},\ \varphi_{i}\in Y^{^{\prime}%
},\ i=1,...,m\ ,\ j=1,...,n$.
\end{quote}
\end{theorem}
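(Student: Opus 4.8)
The plan is to observe that conditions $(i)$ and $(ii)$ differ only in the norm imposed on the vector variables, and that these two norms are linked by the generalized Hölder inequality with the exponents $np,\ldots,np$ (which satisfy $n\cdot\frac{1}{np}=\frac{1}{p}$). Applied to the scalar sequences $\big(\|x_i^{(j)}\|\big)_{i=1}^m$, it yields
\[
\left(\sum_{i=1}^m\prod_{j=1}^n \|x_i^{(j)}\|^{p}\right)^{1/p}\le \prod_{j=1}^n\left(\sum_{i=1}^m\|x_i^{(j)}\|^{np}\right)^{1/np},
\]
so the quantity controlling the right-hand side of $(i)$ is always dominated by that of $(ii)$. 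Consequently $(i)\Rightarrow(ii)$ is immediate: a bound against the smaller quantity is \emph{a fortiori} a bound against the larger one.

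The substance of the statement is the reverse implication $(ii)\Rightarrow(i)$, where the weaker estimate must be upgraded to the stronger one. First I would discard every index $i$ for which some $x_i^{(j)}=0$: by multilinearity $T(x_i^{(1)},\ldots,x_i^{(n)})=0$ for such $i$, so these indices contribute nothing to either side of $(i)$, and one may assume $\|x_i^{(j)}\|>0$ for all $i,j$. The key device is a per-index rescaling that leaves the value of $T$ untouched while equalizing the factor norms. For each $i$ set $g_i=\big(\prod_{k=1}^n\|x_i^{(k)}\|\big)^{1/n}$ and $s_i^{(j)}=g_i/\|x_i^{(j)}\|$, and put $y_i^{(j)}=s_i^{(j)}x_i^{(j)}$. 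Then $\prod_{j}s_i^{(j)}=1$, so $T(y_i^{(1)},\ldots,y_i^{(n)})=T(x_i^{(1)},\ldots,x_i^{(n)})$, while $\|y_i^{(j)}\|=g_i$ for every $j$.

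Applying hypothesis $(ii)$ to the balanced vectors $y_i^{(j)}$ then finishes the argument. The left-hand side is unchanged because the operator values coincide, and on the right-hand side each inner sum collapses to $\sum_{i}\|y_i^{(j)}\|^{np}=\sum_i g_i^{np}=\sum_i\prod_k\|x_i^{(k)}\|^{p}$, independently of $j$; taking the product over the $n$ factors and the $(np)$-th root reproduces exactly $\big(\sum_i\prod_k\|x_i^{(k)}\|^{p}\big)^{1/p}$, which is the constant appearing in $(i)$. The main obstacle is precisely the design of this rescaling: one needs scalars with unit product (to preserve $T$) that simultaneously flatten the $n$ distinct sequences $(\|x_i^{(j)}\|)_i$ into $n$ copies of the single sequence $(g_i)_i$, so that the product of $\ell_{np}$-norms degenerates into a single $\ell_p$-norm. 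The case $p=\infty$ is handled by the same normalization, with the sums replaced by suprema throughout.
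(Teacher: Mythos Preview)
Your argument is correct. Note, however, that the paper does not give a proof of this theorem here: it is quoted from the author's earlier work \cite{jamilson13}, and the methodology used there (mirrored in this paper by Theorem~\ref{teoabsm} and Corollary~\ref{teo2}) is quite different from yours. The paper's route is to cast both $(i)$ and $(ii)$ as instances of the abstract $R_1,\ldots,R_{n+1}$-$S$-abstract $(p_1,\ldots,p_n,p^{*})$-summing condition of Definition~\ref{defabst}, with $R_k(\varphi,\cdot,x^{(k)})=\|x^{(k)}\|$ for $k\le n$ and $R_{n+1}(\psi,\cdot,\varphi)=|\psi(\varphi)|$, and then invoke the Full General Pietsch Domination Theorem (Theorem~\ref{tdpg}). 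Because the first $n$ functions $R_k$ are constant in the $K_k$-variable, the integral domination inequality collapses to an expression that no longer involves the exponents $p_1,\ldots,p_n$ at all; hence $(i)$ and $(ii)$ become equivalent to one and the same Pietsch-type inequality.

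Your approach is entirely elementary: generalized H\"older for $(i)\Rightarrow(ii)$, and the geometric-mean rescaling $y_i^{(j)}=\big(\prod_k\|x_i^{(k)}\|\big)^{1/n}\,x_i^{(j)}/\|x_i^{(j)}\|$ for $(ii)\Rightarrow(i)$. This avoids the measure-theoretic machinery altogether and, as a bonus, shows that the \emph{same} constant $C$ works in both inequalities, something the domination argument does not immediately deliver. The gain of the paper's approach, on the other hand, is its reach: once the abstract framework is set up, the equivalence extends at no extra cost to arbitrary exponent tuples $(q_1,\ldots,q_n)$ with $\frac{1}{q_0}=\frac{1}{q_1}+\cdots+\frac{1}{q_n}+\frac{1}{p^{*}}$ (Corollary~\ref{teo2}), not just the symmetric choice $q_1=\cdots=q_n=np$.
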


\vspace{0,25cm}
As a consequence of Theorem \ref{campos} and the following theorem, similarly to the linear case, we can characterize the class of Cohen strongly summing multilinear operators by means of several inequalities. To do this, we generalize the abstract result of Theorem \ref{teoabs}.

\vspace{0,25cm}
\begin{theorem}\label{teoabsm}
Let $f:X_{1} \times\cdots\times X_{n} \rightarrow Y$ be an application belonging to
$\mathcal{H}$ and let
\[
0<p^{*},p_0,q_0,p_{1},...,p_{t-1},q_{1},...,q_{t-1}<\infty\ ,
\]
such that
\[
\frac{1}{p_0} = \frac{1}{p_{1}} + \cdots+ \frac{1}{p_{t-1}} + \frac{1}{p^{*}%
}\ \ \mathrm{and}\ \ \frac{1}{q_0} = \frac{1}{q_{1}} + \cdots+ \frac{1}{q_{t-1}}
+ \frac{1}{p^{*}}.
\]
If $R_{k_{(x_{1},...,x_{r},b)}}(\cdot)$ is constant for all $x_{1},...,x_{r},b$ and for all $1 \leq k \leq t-1$,
then the following statements are equivalent:

\begin{quote}
\noindent  $(i)$ $f$ is $R_{1},...,R_{t}$-$S$-abstract $(p_{1}
,...,p_{t-1},p^{*})$-summing;

\noindent  $(ii)$ $f$ is $R_{1},...,R_{t}$-$S$-abstract $(q_{1},...,q_{t-1},p^{*})$-summing.
\end{quote}
\end{theorem}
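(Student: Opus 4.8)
The plan is to mirror the argument used for the linear Theorem \ref{teoabs} and to reduce both summability notions, via the Full General Pietsch Domination Theorem, to one and the same domination inequality. First I would apply Theorem \ref{tdpg} to statement $(i)$: the operator $f$ is $R_{1},\dots,R_{t}$-$S$-abstract $(p_{1},\dots,p_{t-1},p^{*})$-summing exactly when there exist a constant $C>0$ and Borel probability measures $\mu_{1},\dots,\mu_{t}$ on $K_{1},\dots,K_{t}$ such that
\[
S\bigl(f,x^{(1)},\dots,x^{(r)},b^{(1)},\dots,b^{(t)}\bigr) \leq C\prod_{k=1}^{t-1}\left(\int_{K_{k}}R_{k}\bigl(\varphi,x^{(1)},\dots,x^{(r)},b^{(k)}\bigr)^{p_{k}}d\mu_{k}\right)^{1/p_{k}}\left(\int_{K_{t}}R_{t}\bigl(\varphi,x^{(1)},\dots,x^{(r)},b^{(t)}\bigr)^{p^{*}}d\mu_{t}\right)^{1/p^{*}}.
\]

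The crux is the standing hypothesis that $R_{k}(\cdot,x^{(1)},\dots,x^{(r)},b^{(k)})$ is constant on $K_{k}$ for every $1\leq k\leq t-1$. For such a $k$ the integrand is constant, so integrating it against a probability measure and extracting the $p_{k}$-th root returns that same constant \emph{independently of the exponent} $p_{k}$; that is, for any fixed $\varphi\in K_{k}$,
\[
\left(\int_{K_{k}}R_{k}\bigl(\varphi,x^{(1)},\dots,x^{(r)},b^{(k)}\bigr)^{p_{k}}d\mu_{k}\right)^{1/p_{k}} = R_{k}\bigl(\varphi,x^{(1)},\dots,x^{(r)},b^{(k)}\bigr).
\]
Substituting this identity collapses the first $t-1$ factors to the product $\prod_{k=1}^{t-1}R_{k}(\varphi,x^{(1)},\dots,x^{(r)},b^{(k)})$, an expression in which the exponents $p_{1},\dots,p_{t-1}$ no longer appear.

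I would then run the identical reduction for statement $(ii)$, with $q_{1},\dots,q_{t-1}$ in the roles of $p_{1},\dots,p_{t-1}$. Since these exponents entered only through the $k\leq t-1$ factors, and those factors have now been absorbed into the exponent-free product above, the domination inequality produced for $(ii)$ is word-for-word the one obtained for $(i)$ (the last factor is untouched, as both statements fix the final exponent at $p^{*}$, i.e.\ $p_{t}=q_{t}=p^{*}$, and keep the genuinely $\varphi$-dependent kernel $R_{t}$). Because each of $(i)$ and $(ii)$ is equivalent, through Theorem \ref{tdpg}, to this common domination condition, they are equivalent to one another.

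I do not expect a serious obstacle: the argument is a direct multilinear repackaging of the linear proof, and the sole nontrivial ingredient is the elementary fact that the $L^{p_{k}}(\mu_{k})$-norm of a constant over a probability space equals that constant for every $p_{k}$. The only point demanding care is the bookkeeping of the variables $x^{(1)},\dots,x^{(r)}$ and $b^{(1)},\dots,b^{(t)}$, together with verifying that the final factor carrying $R_{t}$ and the fixed exponent $p^{*}$ is handled identically on both sides so that the two resulting inequalities genuinely coincide.
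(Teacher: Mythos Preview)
Your proposal is correct and follows essentially the same route as the paper: apply Theorem~\ref{tdpg} to each of $(i)$ and $(ii)$, use the constancy of $R_{k}$ for $1\le k\le t-1$ to collapse the first $t-1$ integral factors to the exponent-free product $\prod_{k=1}^{t-1}R_{k}(\varphi,x^{(1)},\dots,x^{(r)},b^{(k)})$, and observe that the resulting domination inequalities coincide because the remaining $R_{t}$-factor carries the common exponent $p^{*}$. The paper's proof is terser but rests on exactly this observation.
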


\begin{proof}
Similarly to Theorem \ref{teoabs}, it follows that $f$ is $R_{1},...,R_{t}$-$S$-abstract $(p_{1}%
,...,p_{t-1},p^{*})$-summing if and only if there exist a constant $C>0$ and a Borel probability measure $\mu$
in $K_{t}$ such that
\begin{equation*}
\label{domin}S(f,x_{1},...,x_{r},b_{1},...,b_{t}) \leq C \left( \prod_{i=1}^{t-1}
R_{i}(\varphi,x_{1},...,x_{r},b_{i}) \right) \cdot\left(  \int_{K_{t}} R_{t}%
(\varphi,x_{1},...,x_{r},b_{t})^{p^{*}}d\mu\right)  ^{1/p^{*}}\ .
\end{equation*}
An analogous reasoning shows that $f$ is $R_{1},...,R_{t}$%
-$S$-abstract $(q_{1},...,q_{t-1},p^{*})$-summing if and only if there exist a constant $C>0$
and a Borel probability measure $\mu$ in $K_{t}$ such that
\[
S(f,x_{1},...,x_{r},b_{1},...,b_{t}) \leq C \left( \prod_{i=1}^{t-1} R_{i}%
(\varphi,x_{1},...,x_{r},b_{i})\right) \cdot\left(  \int_{K_{t}} R_{t}(\varphi
,x_{1},...,x_{r},b_{t})^{p^{*}}d\mu\right)  ^{1/p^{*}}\ .
\]
\end{proof}

\vspace{0,25cm}
\begin{corollary}\label{teo2}
For $T \in \mathcal{L}(E_1,...,E_n;F)$, $1 = \frac{1}{p} + \frac{1}{p^*}$ and $\frac{1}{q_0} = \frac{1}{q_1} + \cdots + \frac{1}{q_n} + \frac{1}{p^*}$, the following statements are equivalent:
\begin{quote}
\noindent  $(i)$ There exists a constant $C>0$ such that
\begin{equation*}
\sum_{i=1}^m \left\vert \varphi_i\left(T\left(x_i^{(1)},...,x_i^{(n)}\right)\right)\right\vert
\leq C \left(\sum_{i=1}^m  \left\Vert x_i^{(1)}\right\Vert ^{np}\right)^{1/np} ... \ \left(\sum_{i=1}^m  \left\Vert x_i^{(n)}\right\Vert^{np} \right)^{1/np} \ ||(\varphi_i)_{i=1}^m||_{w,p^*}\ ,
\end{equation*}
for all $m \in \mathbb{N},\ x_i^{(j)} \in E_j,\ \varphi_i \in F^{'},\ i=1,...,m\ , \ j=1,...,n$;

\noindent  $(ii)$ There exists a constant $C>0$ such that
\begin{align}\label{defq}
& \left(\sum_{i=1}^m \left\vert \varphi_i\left(T\left(x_i^{(1)},...,x_i^{(n)}\right)\right)\right\vert^{q_0}\right)^{1/q_0} \nonumber \\
& \leq C \left(\sum_{i=1}^m  \left\Vert x_i^{(1)}\right\Vert ^{q_1} \right)^{1/q_1} ... \ \left(\sum_{i=1}^m  \left\Vert x_i^{(n)}\right\Vert ^{q_n} \right)^{1/q_n} \ ||(\varphi_i)_{i=1}^m||_{w,p^*}\ ,
\end{align}
for all $m \in \mathbb{N},\ x_i^{(j)} \in E_j,\ \varphi_i \in F^{'},\ i=1,...,m\ , \ j=1,...,n$ .
\end{quote}
\end{corollary}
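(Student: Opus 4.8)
The plan is to reduce the equivalence directly to Theorem~\ref{teoabsm}, exactly as Corollary~\ref{coincide} reduced the linear coincidence to Theorem~\ref{teoabs}. First I would specialize the abstract framework by setting $t=n+1$ and $r=1$, taking the dummy set $E_1=\{0\}$, the domain sets $X_j=E_j$ and $Y=F$, and $f=T$ with $\mathcal{H}=\mathcal{L}(E_1,\ldots,E_n;F)$. For the compact Hausdorff spaces I would use the singletons $K_1=\cdots=K_n=\{0\}$ together with $K_{n+1}=B_{F''}$, and for the Banach spaces $G_k=E_k$ for $1\le k\le n$ and $G_{n+1}=F'$. The evaluation data would be
\[
S\bigl(T,0,x^{(1)},\ldots,x^{(n)},\varphi\bigr)=\bigl|\varphi\bigl(T(x^{(1)},\ldots,x^{(n)})\bigr)\bigr|,\qquad R_{n+1}(\psi,0,\varphi)=|\psi(\varphi)|,
\]
together with $R_k(\gamma,0,x^{(k)})=\|x^{(k)}\|$ for $1\le k\le n$. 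A short verification then shows these mappings satisfy the continuity, monotonicity and homogeneity requirements stated before Definition~\ref{defabst}: each $R_k$ is positively homogeneous of degree one in its last slot and $S$ is homogeneous of degree one in each of its $t$ last slots by multilinearity of $T$, while $(R_{n+1})_{0,\varphi}(\psi)=|\psi(\varphi)|$ is weak-$*$ continuous on $B_{F''}$.

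Next I would translate the two abstract summing conditions into the two displayed inequalities. With the exponents $(p_1,\ldots,p_n,p_{n+1})=(np,\ldots,np,p^{*})$, the relation forcing $p_0$ reads $\frac{1}{p_0}=\frac{n}{np}+\frac{1}{p^{*}}=\frac{1}{p}+\frac{1}{p^{*}}=1$, so $p_0=1$ and the abstract $(np,\ldots,np,p^{*})$-summing condition becomes statement $(i)$; here the terminal factor collapses to the weak norm,
\[
\sup_{\psi\in B_{F''}}\left(\sum_{i=1}^m|\psi(\varphi_i)|^{p^{*}}\right)^{1/p^{*}}=\|(\varphi_i)_{i=1}^m\|_{w,p^{*}},
\]
because the weak $p^{*}$-norm of a sequence in $F'$ is computed over the unit ball of $(F')'=F''$. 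Likewise, with the exponents $(q_1,\ldots,q_n,p^{*})$ and $p_0=q_0$ determined by the hypothesis $\frac{1}{q_0}=\frac{1}{q_1}+\cdots+\frac{1}{q_n}+\frac{1}{p^{*}}$, the abstract $(q_1,\ldots,q_n,p^{*})$-summing condition becomes exactly statement $(ii)$.

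Finally, since each $R_k$ with $1\le k\le n$ is independent of its first variable, i.e. $(R_k)_{0,x^{(k)}}(\cdot)=\|x^{(k)}\|$ is constant on $K_k$, the hypothesis of Theorem~\ref{teoabsm} is met; both exponent tuples share the terminal index $p^{*}$ and satisfy their respective conjugate relations. Applying Theorem~\ref{teoabsm} then yields the equivalence of $(i)$ and $(ii)$. I expect the only real work to lie in the bookkeeping of the second step: matching the abstract product on the right-hand side factor by factor, confirming that the index relation for $(i)$ forces precisely $p_0=1$, and checking that the last factor reduces to $\|(\varphi_i)_{i=1}^m\|_{w,p^{*}}$. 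There is no genuine analytic obstacle here, as the entire content is carried by Theorem~\ref{teoabsm}; the step most prone to error is simply the consistent identification of the $n+1$ mappings $R_k$ and $S$ with the terms appearing in the two inequalities.
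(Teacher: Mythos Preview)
Your proposal is correct and follows exactly the approach of the paper: you specialize the abstract framework with $t=n+1$, constant $R_1,\ldots,R_n$ and $R_{n+1}(\psi,0,\varphi)=|\psi(\varphi)|$ on $K_{n+1}=B_{F''}$, identify $(i)$ and $(ii)$ with the abstract $(np,\ldots,np,p^{*})$- and $(q_1,\ldots,q_n,p^{*})$-summing conditions respectively, and then invoke Theorem~\ref{teoabsm}. The paper's own proof is terser (it cites \cite{jamilson13}, Theorem 3.8 for the ``adequate choices''), but the content is identical to what you have written out.
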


\begin{proof}
With adequate choices and using Definition \ref{defabst} (see \cite{jamilson13}, Theorem 3.8), if $T$ satisfies $(i)$ then $T$ is  $R_{1},...,R_{n+1}$-$S$-abstract $(np,,...,np,p^{*})$-summing. Using the same argument it can be shown that if $T$ satisfies $(ii)$ then $T$ is $R_{1},...,R_{n+1}$-$S$-abstract $(q_{1},...,q_{n},p^{*})$-summing. So, by Theorem \ref{teoabsm}, we finish the proof.
\end{proof}

\vspace{0,25cm}
From Corollary \ref{teo2} it is also possible to establish more characterizations for the class of Cohen strongly summing multilinear operators by means of sequences and, equivalently, by means of an inequality similar to that given by (\ref{defq}), but adding the index $i$ to infinity:

\vspace{0,25cm}
Let $1 < p \leq \infty$, $E_j,F$ be Banach spaces, $j=1,...,n$ and $\frac{1}{q_0} = \frac{1}{q_1} + \cdots + \frac{1}{q_n} + \frac{1}{p^*}$. An operator $T \in \mathcal{L}(E_1,...,E_n;F)$ is Cohen strongly $p$-summing if
\[ \left(\varphi_i\left( T\left(x_i^{(1)},...,x_i^{(n)}\right)\right)\right)_{i=1}^\infty \in l_{q_0}\ \ \mathrm{whenever}\ \ \left(x_i^{(j)}\right)_{i=1}^\infty \in l_{q_j}(E_j) \ ,\ j=1,...,n\ .\]

\vspace{0,25cm}
This last definition amounts to saying that an operator $T \in \mathcal{L}(E_1,...,E_n;F)$ is Cohen strongly $p$-summing if There exists a constant $C>0$ such that
\begin{equation*}
\left(\sum_{i=1}^\infty \left\vert \varphi_i\left(T\left(x_i^{(1)},...,x_i^{(n)}\right)\right)\right\vert^{q_0}\right)^{1/q_0}
\leq C \left(\sum_{i=1}^\infty  \left\Vert x_i^{(1)}\right\Vert ^{q_1} \right)^{1/q_1} ... \ \left(\sum_{i=1}^\infty  \left\Vert x_i^{(n)}\right\Vert ^{q_n} \right)^{1/q_n} \ ||(\varphi_i)_{i=1}^\infty||_{w,p^*}\ ,
\end{equation*}
whenever $\left(x_i^{(j)}\right)_{i=1}^\infty \in l_{q_j}(E_j) \ ,\ j=1,...,n\ $ and $(\varphi_i)_{i=1}^\infty \in l_{p^*}^w(F^{'})$.

\vspace{0,25cm}
These kinds of characterizations for the class of Cohen strongly summing multilinear operators appear in our work \cite{jamilson13} but without this abstract focus.

\end{document}